\newcommand{\tra}{\operatorname{Tr}}
\newcommand{\BZ}{{\boldsymbol Z}}
\def\Magma{{\sc Magma }}
\newcommand{\tr}{\operatorname{Tr}}
\theoremstyle{plain}
\newtheorem{theorem}{Theorem}
\newtheorem{lemma}[theorem]{Lemma}
\newtheorem{proposition}[theorem]{Proposition}
\theoremstyle{definition}
\theoremstyle{remark}
\newtheorem{remark}[theorem]{Remark}
\newcounter{hours}\newcounter{minutes}
\newcommand{\printtime}{%
        \setcounter{hours}{\time/60}%
        \setcounter{minutes}{\time-\value{hours}*60}%
        \thehours\,h\ \theminutes\,min}
\begin{document}

\title[Klein four group and cyclic groups]
{Separating invariants for the klein four group and cyclic groups}
\date{\today,\ \printtime}
\author{ Martin Kohls}
\address{Technische Universit\"at M\"unchen \\
 Zentrum Mathematik-M11\\
Boltzmannstrasse 3\\
 85748 Garching, Germany}
\email{kohls@ma.tum.de}

\author{M\"uf\.it Sezer}
\address { Department of Mathematics, Bilkent University,
 Ankara 06800 Turkey}
\email{sezer@fen.bilkent.edu.tr}
%\email{mufit.sezer@boun.edu.tr}
\thanks{We thank  Gregor Kemper for funding a visit of the second author to TU M\"{u}nchen and T\" {u}bitak for
funding a visit of the first author to Bilkent University. Second
author is also partially supported by T\"{u}bitak-Tbag/109T384 and
T\"{u}ba-Gebip/2010. }

\subjclass[2000]{13A50} \keywords{Separating invariants, Klein
four group, cyclic groups}
\begin{abstract}
We consider indecomposable representations of the Klein four group
over a field of characteristic $2$ and of a cyclic group of order
$pm$ with $p,m$ coprime over a field of characteristic $p$. For
each representation we explicitly describe a separating set in the
corresponding ring of invariants. Our construction is recursive
and the separating sets we obtain consist of almost entirely orbit
sums and products.
\end{abstract}

 \maketitle

\section{introduction}
Let $V$ be a finite dimensional representation of a group $G$ over
an algebraically closed field $F$. In the sequel we
 will also call $V$ a $G$-module. There is an induced action on the symmetric
algebra $F[V]:=S(V^{*})$ given by $\sigma (f)=f\circ \sigma^{-1}$ for $\sigma
\in G$ and $f\in F[V]$. We let $F[V]^G$ denote the
 subalgebra of invariant polynomials in $F[V]$. A subset
$A\subseteq F[V]^G$ is said to be separating for $V$ if for any
pair of vectors $u,w\in V$, we have: If $f(u)=f(w)$ for all $f\in
A$, then $f(u)=f(w)$ for all $f\in F[V]^G$. Goals in
invariant theory include finding generators and studying properties of
invariant rings. In the
study of separating invariants the goal is rather to find and
describe a subalgebra of the ring of invariants which separates
the group orbits. %just as well as the full ring of invariants does.
Although separating invariants have been object of study since the
early times of invariant theory, they have regained particular
attention following the influential textbook of  Derksen and
Kemper \cite{{DerksenKemper}}. The invariant ring is often too
complicated and it is difficult to describe explicit generators
and relations. Meanwhile, there have been several papers within
the last decade that demonstrate that one can construct separating
subalgebras with nice  properties that make them more
accessible. For instance %there is always a finite separating set
%\cite[Theorem 2.3.15.]{DerksenKemper} and
Noether's (relative) bound holds for separating invariants
independently of the characteristic of the field \cite[Corollary
3.9.14]{DerksenKemper}. For more results on separating algebras we
direct the reader to
\cite{MR2308028,MR2414957,DufresneSeparating,MRdek,MRdk,MRkadish,KohlsKraft,Mr10}.

If the order of the group is divisible by the characteristic of
the field, then the degrees of the generators increase unboundedly
as the dimension of the representation increases. Therefore
computing the invariant ring in this case is particularly
difficult. Even in the simplest situation of a cyclic group of
prime order acting through Jordan blocks, explicit generating sets
are known only for a handful of cases. This rather short list of
cases consists of  indecomposable representations  up to dimension
nine and decomposable ones whose indecomposable summands have
dimension at most four. See \cite{MR1639884} for a classical work
and \cite{weh} for the most recent advances in this matter which
also gives a good taste of the difficulty of the problem. On the
other hand separating invariants for these representations have a
surprisingly simple theory. In \cite{MR2556140,Comb} it is observed
that a separating set for a indecomposable representation of a cyclic
$p$-group over a field of characteristic $p$ can be obtained by
adding some explicitly defined invariant polynomials to a separating set for a
certain quotient representation.  The
main ingredient of the proofs of these results is the efficient
use of the surjection of a representation to a quotient representation
to establish a link between the respective separating sets that
generating sets do not have.
  In this paper we build on this technique to construct separating invariants for
  the indecomposable representations of the Klein four group over a field of
characteristic $2$ and  of a cyclic group of order $pm$ with $p,m$
coprime over a field of characteristic $p$. Despite being the
immediate follow ups of the cyclic $p$-groups, their invariant
rings have not been computed yet. Therefore these groups (and
representations) appear to be the natural cases to consider. As in
the case for cyclic $p$-groups, we describe a finite separating
set recursively. We remark that in \cite[Theorem
3.9.13]{DerksenKemper}, see also \cite[Corollary 19]{MR0000001}, a
way is given for calculating separating invariants explicitly for
any finite group. This is done by presenting a large polynomial
whose coefficients form a separating set. On the other hand, the
separating sets we compute consist of invariant polynomials that
are almost exclusively orbit sums and products. These are
``basic'' invariants  which are easier to obtain. Additionally,
our approach respects the inductive
 structure of the considered modules. Also, the
size of the set we give for the cyclic group of order $pm$ depends
only on the dimension of the representation while the size in
\cite[Theorem 3.9.13]{DerksenKemper} depends on the group order as well.
Hence, for large $p$ and $m$ our separating set is much smaller
for this group.

The strategy of our construction is based on the following
theorem.

 \begin{theorem}
  \label{ilk}
 Let $V$ and $W$ be  $G$-modules,  $\phi:V\rightarrow W$ a $G$-equivariant surjection, and $\phi^{*}:
  F[W]\hookrightarrow F[V]$ the corresponding inclusion.
  Let $S\subseteq F[W]^G$ be a separating set for $W$ and let $T\subseteq F[V]^G$ be a set of invariant polynomials such
  that if $v_1, v_2\in V$ are in different $G$-orbits and if
  $\phi(v_1)=\phi(v_2)$, then there is a polynomial $f\in T$ such
  that $f(v_1)\neq f(v_2)$. Then $\phi^{*}(S)\cup T$ is a separating set for
  $V$.
 \end{theorem}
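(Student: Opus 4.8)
The plan is to deduce the separating property from the stronger statement that the hypotheses force $u$ and $w$ to lie in a common $G$-orbit: once $w=\tau u$ for some $\tau\in G$, every $h\in F[V]^G$ satisfies $h(u)=h(\tau u)=h(w)$, which is exactly the implication demanded by the definition of a separating set. So I would fix $u,w\in V$ with $f(u)=f(w)$ for all $f\in\phi^{*}(S)\cup T$ and aim to show that $u$ and $w$ share an orbit. Throughout I will use the standard fact that for a finite group acting over an algebraically closed field the invariant ring separates orbits, i.e. two vectors of a $G$-module agree on every invariant precisely when they lie in one orbit (see e.g. \cite{DerksenKemper}).

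First I would push the information down to $W$. Since $\phi^{*}(s)=s\circ\phi$, the equality $\phi^{*}(s)(u)=\phi^{*}(s)(w)$ reads $s(\phi(u))=s(\phi(w))$, so $\phi(u)$ and $\phi(w)$ agree on every element of $S$. As $S$ is separating for $W$, they then agree on all of $F[W]^G$, and by the orbit-separation fact applied to $W$ they lie in the same $G$-orbit. Hence there is $\sigma\in G$ with $\sigma\cdot\phi(u)=\phi(w)$, and by $G$-equivariance of $\phi$ this yields the literal equality $\phi(\sigma u)=\phi(w)$ in $W$.

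The decisive step is then to feed the pair $(\sigma u,\,w)$ into the hypothesis on $T$. These two vectors have identical image under $\phi$, so if they lay in different $G$-orbits there would be some $f\in T$ with $f(\sigma u)\neq f(w)$; invariance of $f$ turns this into $f(u)=f(\sigma u)\neq f(w)$, contradicting $f(u)=f(w)$. Therefore $\sigma u$ and $w$ lie in the same orbit, and since $\sigma u$ is evidently in the orbit of $u$, so are $u$ and $w$. By the opening reduction this proves that $\phi^{*}(S)\cup T$ is separating for $V$.

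The only genuine obstacle is arranging that the $T$-hypothesis can actually be invoked: it requires two vectors with equal image in $W$, whereas a priori $\phi(u)$ and $\phi(w)$ are merely in the same orbit. Replacing $u$ by $\sigma u$ to force the exact equality $\phi(\sigma u)=\phi(w)$ — a replacement that is harmless precisely because the members of $T$ are invariant — is the crux of the argument; everything else is bookkeeping. I would also note that the orbit-separation fact is needed only on the $W$-side, never for $V$ itself, because on the $V$-side we reach the strictly stronger conclusion that $u$ and $w$ occupy a single orbit.
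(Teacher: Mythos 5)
Your proof is correct and takes essentially the same route as the paper's: both use the separating property of $S$ (together with the fact that invariants of a finite group over an algebraically closed field separate orbits, applied on the $W$ side) to reduce to the case where the $\phi$-images lie in a single orbit, then replace one vector by a group translate to force exact equality of images and invoke the hypothesis on $T$, with the invariance of the elements of $T$ justifying the replacement. The only difference is one of direction — you argue contrapositively (agreement on $\phi^{*}(S)\cup T$ implies same orbit) whereas the paper starts from two vectors in different orbits and produces a separating element — so the content is identical.
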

 \begin{proof}
 Pick two vectors $v_1, v_2\in V$ in different $G$-orbits.
 If $\phi (v_1)$ and $\phi (v_2)$ are in different $G$-orbits, then there exits a
 polynomial $f\in S$ that separates these vectors, so $\phi^{*}(f)$
 separates $v_{1},v_{2}$. So we may assume that
 $\phi (v_1)$
 and $\phi (v_2)$ are in the same $G$-orbit. Furthermore, by replacing
 $v_2$ with a suitable vector in its orbit we may take $\phi (v_1)=\phi
 (v_2)$. Hence, by construction, $T$ contains an invariant that separates $v_1$
 and $v_2$ as desired.
 \end{proof}
Before we finish this section we recall the definitions of a
transfer and a norm. For a subgroup $H\subseteq G$ and $f\in
F[V]^H$, the relative transfer $\tra^G_H(f)$ is defined to be
$\sum_{\sigma \in G/H}\sigma (f)$. We also denote
$\tra^G_{\{\iota\}}(f)=\tra^G (f)$, where $\iota$ is the identity
element of $G$. Also for $f\in F[V]$, the norm  $N_H(f)$ is
defined to be  the product $\prod_{\sigma \in H}\sigma (f)$.

\section{ The Klein four group }

For the Klein four group
$G=\{\iota,\sigma_{1},\sigma_{2},\sigma_{3}\}$ over an
algebraically closed field $F$ of characteristic $2$, the complete
list of indecomposable $G$-modules  is given in Benson
\cite[Theorem 4.3.3]{BensonCohomology1}. For each module in the
list, we will explicitly construct  a finite separating set. The
modules in this list come  in five ``types''. We use the same
enumeration as in \cite{BensonCohomology1}. The first type (i) is
just the regular representation $FG$ of $G$, and a separating set
or even the invariant ring can be computed with \Magma
\cite{Magma}. In the following, we will thus concentrate on the
remaining four types, where each type consists of an infinite
series of indecomposable representations. Let $I_{n}$ denote the
identity matrix of $F^{n\times n}$, and $J_{\lambda}$ denote an
upper triangular Jordan block of size $n$ with eigenvalue
$\lambda\in F$. Let $H_{i}=\{\iota,\sigma_{i}\}$ for $i=1,2,3$ be
the three subgroups of order $2$.

\subsection{Type (ii)}
Let $G$ act on $V_{2n}=F^{2n}$ by the representation
$\sigma_{1}\mapsto \left(\begin{array}{cc} I_{n}&I_{n}\\ 0& I_{n}
  \end{array}\right)$ and $\sigma_{3}\mapsto\left(\begin{array}{cc} I_{n}&J_{\lambda}\\ 0& I_{n}
  \end{array}\right)$. We write $F[V_{2n}]=F[x_{1},\ldots,x_{2n}]$. We then
have
\[
\begin{array}{rclcl}
\sigma_{1}x_{i}&=&x_{i}+x_{n+i}  &\text{ for }&1\le i\le n,\\
\sigma_{3}x_{i}&=&x_{i}+\lambda x_{n+i}+x_{n+i+1} &\text{ for
}&1\le i\le
n-1,\\
\sigma_{3} x_{n}&=&x_{n}+\lambda x_{2n},\\
x_{n+i}&\in&F[V_{2n}]^{G}& \text{ for }&1\le i\le n.
\end{array}
\]
We  start by computing several transfers and norms modulo some
subspaces of $F[V_{2n}]$. Define $R:=F[x_{2},\ldots,x_{n}]$. Note
that $S:=F[x_{1},\ldots,x_{n-1},x_{n+1},\ldots,x_{2n}]$ is a
$G$-subalgebra of $F[V_{2n}]$, and the congruence in Lemma
\ref{trxixj}(a) also holds modulo $S\cap
R=F[x_{2},\ldots,x_{n-1},x_{n+1},\ldots,x_{2n}]$.  This will be
needed for type (v), so we mark this result with a star.

\begin{lemma}\label{trxixj} We have
\begin{enumerate}
\item[(a*)] $\tra^G(x_1x_ix_j)\equiv
x_1(x_{n+i}x_{n+j+1}+x_{n+i+1}x_{n+j})
  \mod R$ for $2\le i,j\le n-1$.
\item[(b)] $\tra^{G}(x_{1}x_{n-1}x_{n})\equiv x_{1}x_{2n}^{2}\mod
R$.
\end{enumerate}
\end{lemma}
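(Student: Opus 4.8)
The plan is to reduce the whole computation to the coefficient of $x_1$. First I would record the action of the missing element $\sigma_2=\sigma_1\sigma_3$: composing the two given matrices yields $\sigma_2 x_i = x_i+(1+\lambda)x_{n+i}+x_{n+i+1}$ for $1\le i\le n-1$ and $\sigma_2 x_n = x_n+(1+\lambda)x_{2n}$. Writing $\sigma x_k = x_k+c_k^{\sigma}$, every increment $c_k^{\sigma}$ is a linear form in the invariants $x_{n+1},\dots,x_{2n}$, and adopting the convention $x_{2n+1}=0$ lets the formulas for $\sigma_1,\sigma_2,\sigma_3$ treat $k=n$ uniformly. The crucial point is that each $c_k^{\sigma}$ is free of $x_1$, so the part of $\tra^{G}(x_1x_ix_j)$ that does not involve $x_1$ already lies in $R$; hence it suffices to identify the coefficient of $x_1$ in the transfer.

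Since $i,j\ge 2$, the monomial $x_1x_ix_j$ is linear in $x_1$, and because $\sigma$ only adds $x_1$-free terms to each factor, the coefficient of $x_1$ in $\sigma(x_1x_ix_j)$ is $(x_i+c_i^{\sigma})(x_j+c_j^{\sigma})$. Summing over the four elements of $G$ and expanding, the term $x_ix_j$ appears four times and so vanishes in characteristic $2$, while the two cross terms carry the factors $\sum_{\sigma\in G}c_i^{\sigma}$ and $\sum_{\sigma\in G}c_j^{\sigma}$; a one-line check shows $\sum_{\sigma\in G}c_k^{\sigma}=0$ for every $k$. Thus modulo $R$ the transfer equals $x_1\sum_{\sigma\in G}c_i^{\sigma}c_j^{\sigma}$, and the whole statement comes down to evaluating this last sum.

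This is the only genuine computation, and I expect the bookkeeping to concentrate here rather than any conceptual difficulty. Expanding $c_i^{\sigma}c_j^{\sigma}$ for $\sigma=\sigma_1,\sigma_2,\sigma_3$ and collecting terms, I would use $2\lambda=0$ and $(1+\lambda)^2=1+\lambda^2$ to find that the coefficient of $x_{n+i}x_{n+j}$ is $1+(1+\lambda)^2+\lambda^2=0$, that of $x_{n+i+1}x_{n+j+1}$ is $1+1=0$, and that of $x_{n+i}x_{n+j+1}+x_{n+i+1}x_{n+j}$ is $(1+\lambda)+\lambda=1$. This gives $\sum_{\sigma\in G}c_i^{\sigma}c_j^{\sigma}=x_{n+i}x_{n+j+1}+x_{n+i+1}x_{n+j}$, which is (a*); specialising to $j=n$, where the convention $x_{2n+1}=0$ kills the first summand, leaves $x_{2n}^2$ and proves (b). For the starred refinement I would finally note that when $2\le i,j\le n-1$ neither the surviving variables $x_1,x_i,x_j$ nor the increments $c_1^{\sigma},c_i^{\sigma},c_j^{\sigma}$ involve $x_n$, so the entire transfer---and therefore the discarded $x_1$-free remainder---is free of $x_n$ as well, and the congruence in fact holds modulo $S\cap R$.
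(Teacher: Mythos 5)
Your proof is correct and follows essentially the same route as the paper: both reduce modulo the span of $x_1$-free monomials (which is what the paper's $R$ is intended to denote), expand the transfer over the four group elements, and let the characteristic-$2$ cancellations do the work, with (b) obtained from (a*) by the convention $x_{2n+1}=0$. Your bookkeeping via the increments $c_k^{\sigma}$, and your closing observation that for $2\le i,j\le n-1$ no $x_n$ ever appears so the congruence holds modulo $S\cap R$, are just a more systematic write-up of the identical computation.
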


\begin{proof}
(a*) We only have to care for terms containing $x_{1}$, so
\begin{eqnarray*}
\tra^G(x_1x_ix_j)&\equiv& x_1x_ix_j+x_1(x_i+x_{n+i})(x_j+x_{n+j})\\
&&+
x_{1}(x_{i}+\lambda x_{n+i}+  x_{n+i+1})(x_{j}+\lambda x_{n+j} +x_{n+j+1})\\&&+x_{1}(x_{i}+(\lambda+1)x_{n+i}+x_{n+i+1})(x_{j}+(\lambda+1)x_{n+j}+x_{n+j+1})\\
&\equiv& x_1x_{n+i}x_{n+j+1}+x_1x_{n+i+1}x_{n+j} \mod R.
\end{eqnarray*}
(b) Follows from above with $i=n-1,\,\,j=n$ and setting
$x_{2n+1}:=0$.
% \begin{eqnarray*}
% \tra^{G}(x_{1}x_{n-1}x_{n})&\equiv& x_{1}x_{n-1}x_{n} +
% x_{1}(x_{n-1}+x_{2n-1})(x_{n}+x_{2n})\\
% &&+x_{1}(x_{n-1}+\lambda x_{2n-1}+x_{2n})(x_{n}+\lambda x_{2n})\\
% &&+x_{1}(x_{n-1}+(\lambda+1)x_{2n-1}+x_{2n})(x_{n}+(\lambda+1)x_{2n})\\
% &\equiv&x_{1}x_{2n}^{2}\mod R.
% \end{eqnarray*}
\end{proof}

\begin{lemma}\label{trx1x23lambda}
For $n\ge 3$ we have
\begin{enumerate}
\item[(a)] $\tra^{G}(x_{1}x_{2}^{3})\equiv
\lambda(\lambda+1)x_{1}x_{n+2}^{3} \mod (R+x_{n+3}F[V_{2n}]).$
\item[(b)] For $\lambda\in\{0,1\}$, we have the invariant
\[N_{H_{2}}(x_1x_{n+2}+x_{2}x_{n+1})\equiv x_1^2x_{n+2}^2+
x_{1}x_{n+2}(x_{n+2}^{2}+x_{n+1}x_{n+3}) \mod R.\]
\end{enumerate}
\end{lemma}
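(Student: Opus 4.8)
The plan is to reuse the device from the proof of Lemma~\ref{trxixj}: reducing modulo $R$ means we only need to track the monomials divisible by $x_1$. The first observation is that for any polynomial $g$ not involving $x_1$ one has $\tra^G(x_1g)\equiv x_1\,\tra^G(g)\bmod R$, because in $\tra^G(x_1g)=\sum_\sigma\sigma(x_1)\sigma(g)$ none of the $\sigma(x_i)$ with $i\ge2$ involves $x_1$, so the only $x_1$-divisible contribution is $x_1\sum_\sigma\sigma(g)$. Applied to $g=x_2^3$, this reduces part~(a) to computing $\tra^G(x_2^3)$.

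For part~(a) I write $\sigma(x_2)=x_2+\delta_\sigma$, where $\delta_\sigma$ runs through $0,\,x_{n+2},\,\lambda x_{n+2}+x_{n+3},\,(\lambda+1)x_{n+2}+x_{n+3}$ as $\sigma$ runs through $\iota,\sigma_1,\sigma_3,\sigma_2$. Expanding in characteristic $2$ via $(x_2+\delta)^3=x_2^3+x_2^2\delta+x_2\delta^2+\delta^3$ and summing over the group, the constant, linear and quadratic parts vanish since $\sum_\sigma\delta_\sigma=0$ and $\sum_\sigma\delta_\sigma^2=(\sum_\sigma\delta_\sigma)^2=0$ (the cancellations already used in Lemma~\ref{trxixj}), leaving $\tra^G(x_2^3)=\sum_\sigma\delta_\sigma^3$. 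Reducing modulo the ideal $x_{n+3}F[V_{2n}]$ replaces each $\delta_\sigma$ by its $x_{n+2}$-part, so $\sum_\sigma\delta_\sigma^3\equiv(1+\lambda^3+(\lambda+1)^3)x_{n+2}^3=\lambda(\lambda+1)x_{n+2}^3$; multiplying by $x_1$ and absorbing the remaining $x_{n+3}$-multiples yields the claim.

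For part~(b) I put $f=x_1x_{n+2}+x_2x_{n+1}$ and compute $\sigma_2(f)$ using that $x_{n+1},x_{n+2}$ are invariant together with $\sigma_2x_1=x_1+(\lambda+1)x_{n+1}+x_{n+2}$ and $\sigma_2x_2=x_2+(\lambda+1)x_{n+2}+x_{n+3}$. The two copies of $(\lambda+1)x_{n+1}x_{n+2}$ cancel in characteristic $2$, so $\sigma_2(f)=f+(x_{n+2}^2+x_{n+1}x_{n+3})$, independently of $\lambda$. Hence $N_{H_2}(f)=f\,\sigma_2(f)=f^2+(x_{n+2}^2+x_{n+1}x_{n+3})f$; expanding $f^2=x_1^2x_{n+2}^2+x_2^2x_{n+1}^2$ and the product and then dropping every monomial not divisible by $x_1$ leaves exactly $x_1^2x_{n+2}^2+x_1x_{n+2}(x_{n+2}^2+x_{n+1}x_{n+3})$. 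That $N_{H_2}(f)$ is a genuine $G$-invariant follows from $\sigma_1(f)=f$ and $\sigma_3(f)=\sigma_2(f)$; the point of restricting to $\lambda\in\{0,1\}$ is that precisely there the factor $\lambda(\lambda+1)$ in part~(a) vanishes, so the transfer $\tra^G(x_1x_2^3)$ no longer produces the term $x_1x_{n+2}^3$ and this norm is needed instead.

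Neither computation conceals a real difficulty; the work is purely characteristic-$2$ bookkeeping. The step requiring the most care is the consistent use of the fact that working modulo $R$ retains only the $x_1$-divisible part, together with getting the binomial coefficients right so that the first and second power sums of the increments $\delta_\sigma$ vanish while the third does not. In part~(b) the small cancellation of the two $(\lambda+1)x_{n+1}x_{n+2}$ terms, which removes the $\lambda$-dependence of $\sigma_2(f)$, is the one spot worth verifying explicitly.
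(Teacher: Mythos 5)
Your proof is correct and takes essentially the same route as the paper: part (a) is the same characteristic-$2$ expansion keeping only the $x_1$-divisible, $x_{n+3}$-free terms (you merely factor out $x_1$ first and sum powers of the increments $\delta_\sigma$), and part (b) rests on the same key fact the paper invokes, namely that $f=x_1x_{n+2}+x_2x_{n+1}$ is fixed by $\sigma_1$, which gives $\sigma_3(f)=\sigma_2(f)$ and hence $G$-invariance of $N_{H_2}(f)$. You actually do slightly more than the paper, which for (b) records only the invariance and leaves the displayed congruence (valid, as you note, for every $\lambda$) as an implicit computation.
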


\begin{proof}
(a) We only care for terms containing $x_{1}$ and not $x_{n+3}$,
so
\begin{eqnarray*}
\tra^{G}(x_{1}x_{2}^{3})&\equiv&x_{1}x_{2}^{3}+x_{1}(x_{2}+x_{n+2})^{3}+x_{1}(x_{2}+\lambda x_{n+2})^{3}\\
&&+x_{1}(x_{2}+(\lambda+1) x_{n+2})^{3}\\
&\equiv&\lambda(\lambda+1)x_{1}x_{n+2}^{3} \mod
(R+x_{n+3}F[V_{2n}]).
\end{eqnarray*}
(b) Just note that $x_1x_{n+2}+x_{2}x_{n+1}$ is $H_{1}$-invariant,
so the norm is $G$-invariant.
\end{proof}

Let $(a_1, \dots , a_{n}, a_{n+1}, \dots , a_{2n})\in F^{2n}$. We
have a $G$-equivariant surjection $V_{2n}\rightarrow V_{2n-2}$
given by $$\phi: (a_1, \dots , a_{n}, a_{n+1}, \dots , a_{2n})
\rightarrow (a_2, \dots , a_{n}, a_{n+2}, \dots , a_{2n})\in
F^{2n-2}.$$ Therefore $F[V_{2n-2}]=F[x_2, \cdots , x_{n}, x_{n+2},
\cdots x_{2n}]$ is a $G$-subalgebra of $F[V_{2n}]=F[x_1, \cdots ,
x_{n}, x_{n+1}, \cdots ,x_{2n}].$

\begin{proposition}
Let $n\ge 3$ and $S\subseteq F[V_{2n-2}]^G$ be a separating set
for $V_{2n-2}$. Then $\phi^{*}(S)$ together with the set $T$
consisting of \[x_{n+1}, \quad N_G(x_1), \quad
f_{\lambda}:=\left\{\begin{array}{ll} \tra^{G}(x_{1}x_{2}^{3})
    &\text{ for }\lambda\ne 0,1\\N_{H_{2}}(x_1x_{n+2}+x_{2}x_{n+1})&\text{ for
    }\lambda\in \{0,1\}\end{array}\right.
\]
  \[\tra^G(x_1x_{i}x_{i+1})\,\, \text{ for }
2\le i\le n-1,\]
 is a separating set for $V_{2n}$.
\end{proposition}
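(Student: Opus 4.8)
The plan is to invoke Theorem~\ref{ilk} with the given surjection $\phi\colon V_{2n}\to V_{2n-2}$. Since $S$ is already a separating set for $V_{2n-2}$, it suffices to verify the hypothesis on $T$: namely, that whenever $v_1,v_2\in V_{2n}$ lie in different $G$-orbits but $\phi(v_1)=\phi(v_2)$, some element of $T$ separates them. So I would fix two such vectors $v_1=(a_1,\dots,a_{2n})$ and $v_2=(b_1,\dots,b_{2n})$ with $\phi(v_1)=\phi(v_2)$. The condition $\phi(v_1)=\phi(v_2)$ means precisely that all coordinates except possibly the first ($x_1$) and the $(n+1)$-st ($x_{n+1}$) agree; that is, $a_i=b_i$ for $i\notin\{1,n+1\}$. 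The whole argument then reduces to analyzing how the polynomials in $T$ distinguish vectors that can differ only in these two coordinates.

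Let me carry this out in steps. First I would use $x_{n+1}$: since $x_{n+1}$ is $G$-invariant, if $a_{n+1}\neq b_{n+1}$ we are done immediately. So assume $a_{n+1}=b_{n+1}$, leaving the first coordinate as the only possible difference, $a_1\neq b_1$. Now I split according to whether $a_{n+1}=b_{n+1}$ is zero or not, because the orbit of a vector depends heavily on this value. \textbf{Case 1:} if $a_{n+1}=b_{n+1}\neq0$, then I expect the norm $N_G(x_1)$ to do the separating. The key observation is that, reading off the group action, the four values $\sigma(x_1)$ for $\sigma\in G$ evaluated at a vector are $a_1$, $a_1+a_{n+1}$, $a_1+\lambda a_{n+1}+a_{n+2}$, and $a_1+(\lambda+1)a_{n+1}+a_{n+2}$ (up to the precise group labeling); these are the four roots, and $N_G(x_1)$ is their product. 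I would argue that when $a_{n+1}\neq0$ these four values are genuinely the orbit of the first coordinate and that the product, as a function of $a_1$ with the remaining coordinates fixed, is injective modulo the symmetry already accounted for — so that $N_G(x_1)(v_1)=N_G(x_1)(v_2)$ together with $a_1\neq b_1$ would force $v_1,v_2$ into the same orbit, a contradiction. This is essentially checking that the map $a_1\mapsto N_G(x_1)$ separates the two residues $a_1,b_1$ unless they are related by the group action.

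\textbf{Case 2:} if $a_{n+1}=b_{n+1}=0$, then $\sigma_1$ fixes $x_1$, so $N_G(x_1)$ degenerates and I need the remaining invariants $f_\lambda$ and the transfers $\tra^G(x_1x_ix_{i+1})$. Here I would use the explicit congruences computed in Lemmas~\ref{trxixj} and \ref{trx1x23lambda}: modulo $R=F[x_2,\dots,x_n]$, these invariants become explicit polynomials that are \emph{linear} in $x_1$ with coefficients built from the invariant coordinates $x_{n+i}$. Since $v_1$ and $v_2$ agree on all $x_i$ with $i\neq1$, the values of these invariants at $v_1$ and $v_2$ differ by $(a_1-b_1)$ times a common coefficient; hence separation holds precisely when at least one such coefficient is nonzero at the common value. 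The task is thus to show that the coefficients arising from $f_\lambda$ and from the transfers $\tra^G(x_1x_ix_{i+1})$ ($2\le i\le n-1$), together with the possibility of using $x_{n+1}$, cannot all vanish simultaneously unless $v_1$ and $v_2$ are in the same orbit. I anticipate the main obstacle to be this Case~2 coefficient analysis: one must show that for every admissible choice of the surviving coordinates $a_{n+2},\dots,a_{2n}$ (and the sub-case split on $\lambda\in\{0,1\}$ versus $\lambda\neq0,1$, which is exactly why $f_\lambda$ is defined piecewise), the linear-in-$x_1$ invariants in $T$ jointly detect the difference $a_1\neq b_1$. The transfers from Lemma~\ref{trxixj}(a*) give coefficients $x_1(x_{n+i}x_{n+j+1}+x_{n+i+1}x_{n+j})$ that cover the ``generic'' situations where some $a_{n+i}\neq0$, while $f_\lambda$ (via Lemma~\ref{trx1x23lambda}) supplies the invariant needed in the remaining degenerate configuration, and I would organize Case~2 as a secondary case distinction on which of the coordinates $a_{n+2},\dots,a_{2n}$ vanish to confirm that the list is exhaustive.
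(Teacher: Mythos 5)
Your opening moves (invoke Theorem~\ref{ilk}, use $x_{n+1}$ to reduce to $a_{n+1}=b_{n+1}$, so that the vectors differ only in the first coordinate) match the paper, but your Case~1 contains a genuine error, and Case~2 is only a plan. In Case~1 you claim that when $a_{n+1}\neq 0$, the equality $N_G(x_1)(v_1)=N_G(x_1)(v_2)$ together with $a_1\neq b_1$ forces $v_1$ and $v_2$ into the same orbit. This is false: equality of the norms only forces $a_1+b_1\in\{0,\,a_{n+1},\,\lambda a_{n+1}+a_{n+2},\,(\lambda+1)a_{n+1}+a_{n+2}\}$, i.e.\ the \emph{first} coordinates are matched by the first-coordinate action of some $\sigma_j$; but $\sigma_j$ also adds terms involving $a_{n+2},\ldots,a_{2n}$ to the coordinates $2,\ldots,n$ of $v_2$, so $\sigma_j v_2$ need not equal $v_1$. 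You are conflating the orbit of the first coordinate with the orbit of the vector. Concretely, take $n=3$, $\lambda\notin\{0,1\}$, $v_1=(1,0,0,1,1,0)$, $v_2=(0,0,0,1,1,0)$. Then $\phi(v_1)=\phi(v_2)$, $a_{n+1}=1\neq 0$, and both $x_{n+1}$ and $N_G(x_1)$ agree on $v_1,v_2$ (both norms vanish, since $b_1=0$ and $a_1+a_4=0$); yet the orbit of $v_2$ is $\{(0,0,0,1,1,0),(1,1,0,1,1,0),(\lambda+1,\lambda,0,1,1,0),(\lambda,\lambda+1,0,1,1,0)\}$, which does not contain $v_1$. (These two vectors \emph{are} separated by $f_\lambda=\tra^{G}(x_{1}x_{2}^{3})$, as the proposition promises, but not by the invariants your Case~1 relies on.)

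This is exactly why the paper's proof does not split on whether $a_{n+1}$ vanishes. Instead it first uses the transfers $\tra^G(x_1x_ix_{i+1})$, $2\le i\le n-1$, via Lemma~\ref{trxixj}: part (b) forces $a_{2n}=0$, and then part (a*) forces $a_{2n-1}=a_{2n-2}=\cdots=a_{n+3}=0$ successively. Next it eliminates or exploits $a_{n+2}$ using $f_\lambda$ (Lemma~\ref{trx1x23lambda}); this is where the dichotomy $\lambda\neq 0,1$ versus $\lambda\in\{0,1\}$ is actually used, the latter case yielding $v_1=\sigma_3v_2$ or $v_1=\sigma_2v_2$ directly when $a_{n+2}\neq 0$. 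Only after all of $a_{n+2},\ldots,a_{2n}$ are known to vanish does it apply $N_G(x_1)$, at which point matching first coordinates genuinely gives $v_1=\sigma_iv_2$, because $\sigma_i$ then fixes all other coordinates of $v_2$. Your Case~2 gestures at precisely this kind of analysis (a ``secondary case distinction on which of the coordinates $a_{n+2},\ldots,a_{2n}$ vanish'') but never carries it out; since that same analysis is what is needed to repair Case~1, your split on $a_{n+1}$ buys nothing, and the substantive content of the proof --- the ordered elimination of coordinates and the final orbit identification --- is missing.
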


\begin{proof}
Let $v_1=(a_1, \dots , a_{n}, a_{n+1}, \dots , a_{2n} )$ and
$v_2=(b_1, \dots , b_{n}, b_{n+1}, \dots , b_{2n} )$ be two
vectors in $V_{2n}$ in different $G$-orbits with
$\phi(v_{1})=\phi(v_{2})$, so  $a_i=b_i$ except for  $i=1, n+1$.
To  apply Theorem \ref{ilk}, we assume for a contradiction that
all elements of $T$ take the same values on $v_{1}$ and $v_{2}$.
Since $x_{n+1}\in T$, we have $a_{n+1}=b_{n+1}$, hence we have
$v_2=(b_1, a_2, \dots , a_{n}, a_{n+1}, \dots , a_{2n} )$. Because
of Lemma \ref{trxixj} (b) we can assume $a_{2n}=0$. Since
$\tra^{G}(x_1x_{i}x_{i+1})\equiv
x_{1}(x_{n+i}x_{n+i+2}+x_{n+i+1}^{2}) \mod R$ for $2\le i\le n-2$,
we succesively get $a_{2n-1}=a_{2n-2}=\ldots=a_{n+3}=0$. In case
$\lambda\ne 0,1$ we can also assume $a_{n+2}=0$ by Lemma
\ref{trx1x23lambda}(a). In case $\lambda\in\{0,1\}$ and
$a_{n+2}\ne 0$, $N_{H_{2}}(x_1x_{n+2}+x_{2}x_{n+1})$ taking the
same value on $v_{1},v_{2}$ implies $a_{1}=b_{1}+a_{n+2}$, hence
$v_{1}=\sigma_{3}v_{2}$ for $\lambda=0$ and
$v_{1}=\sigma_{2}v_{2}$ for $\lambda=1$ respectively. So now
assume $a_{n+2}=0$. Then $N_{G}(x_{1})(v_{1})=N_{G}(x_{1})(v_{2})$
implies $a_{1}+b_{1}\in\{a_{n+1},\lambda
a_{n+1},(\lambda+1)a_{n+1}\}$, hence $v_{1}=\sigma_{i}v_{2}$ for
some $i\in\{1,2,3\}$.
\end{proof}

We give the induction start for $n=2$ and $\lambda\ne 0,1$ - the
case $\lambda\in\{0,1\}$ is left to the reader (or to \Magma).

\begin{lemma}
A separating set for $\lambda\ne 0,1$ and $n=2$ is given by the
invariants
\[
f_{1}:=x_{1}x_{4}+\frac{1}{\lambda(\lambda+1)}x_{2}^{2}+x_{2}(x_{3}+\frac{1}{\lambda(\lambda+1)}x_{4}),
\]
\[
N_{G}(x_{1}), \quad N_{G}(x_{2}), \quad  x_{3}, \quad x_{4}
\]
\end{lemma}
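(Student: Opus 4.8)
The plan is to first confirm that the five listed polynomials are genuinely invariant, and then to run the standard separating-set argument: assuming two vectors $v_1=(a_1,a_2,a_3,a_4)$ and $v_2=(b_1,b_2,b_3,b_4)$ agree on all five invariants, I would deduce that they lie in a single $G$-orbit. The invariance of $N_G(x_1)$, $N_G(x_2)$, $x_3$, $x_4$ is immediate, since a norm over all of $G$ is automatically $G$-invariant and $x_3,x_4$ are fixed; so the only thing to verify is that $f_1$ is invariant. Writing $c=\tfrac{1}{\lambda(\lambda+1)}$, applying $\sigma_1$ cancels the residual terms automatically, while applying $\sigma_3$ leaves an $x_4^2$-term with coefficient $1+c\lambda^2+c\lambda=1+c\lambda(\lambda+1)$, which vanishes in $\cha F=2$ precisely because $c$ is calibrated so that $c\lambda(\lambda+1)=1$. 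As $\sigma_1,\sigma_3$ generate $G$, this gives $f_1\in F[V_4]^G$.

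For the separating property, evaluating $x_3,x_4$ forces $a_3=b_3=:s$ and $a_4=b_4=:t$, and $G$ moves only the first two coordinates: the four vectors $\sigma v_1$ differ from $v_1$ in the $(x_1,x_2)$-coordinates by the shift pairs $(0,0)$, $(s,t)$, $(\lambda s+t,\lambda t)$, $((\lambda+1)s+t,(\lambda+1)t)$. Thus $v_1,v_2$ lie in one orbit exactly when $(\delta,\epsilon):=(b_1+a_1,\,b_2+a_2)$ is one of these four pairs. The main device is that each norm factors through an additive ($2$-)polynomial: the shifts occurring in $N_G(x_2)$, namely $\{0,t,\lambda t,(\lambda+1)t\}$, form an $\mathbb{F}_2$-subspace, so after setting $x_4=t$ the polynomial $N_G(x_2)$ is (a power of) the linearized polynomial with that kernel; agreement of $N_G(x_2)$ on $v_1,v_2$ therefore forces $\epsilon\in\{0,t,\lambda t,(\lambda+1)t\}$, and likewise agreement of $N_G(x_1)$ forces $\delta\in\{0,s,\lambda s+t,(\lambda+1)s+t\}$.

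These two constraints alone admit $16$ pairs $(\delta,\epsilon)$, so the role of $f_1$ is to couple the two coordinates and cut this down to the four legitimate group shifts. A direct substitution gives $f_1(v_2)-f_1(v_1)=\delta t+c\epsilon^2+\epsilon s+c\epsilon t$, so $f_1$-agreement is the single relation $\delta t=c\epsilon^2+\epsilon s+c\epsilon t$. I would finish with a short split on $t$. When $t\neq 0$ this relation determines $\delta$ uniquely from $\epsilon$, and checking the four admissible values of $\epsilon$ shows the resulting $\delta$ is exactly the matching group shift, each verification collapsing via $c\lambda(\lambda+1)=1$. When $t=0$ the relation is vacuous in $\delta$, but then $N_G(x_2)=x_2^4$ forces $\epsilon=0$ and $N_G(x_1)$ forces $\delta\in\{0,s,\lambda s,(\lambda+1)s\}$, which together with $\epsilon=0$ are precisely the four admissible shifts (or the single shift $0$ when also $s=0$). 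In every case $(\delta,\epsilon)$ is a genuine group shift, so $v_2\in Gv_1$. I expect the main obstacle to be the $t\neq 0$ bookkeeping, namely verifying that the unique $\delta$ produced by the $f_1$-relation lands on the correct shift for each admissible $\epsilon$; this is exactly the step where the special value of $c$ is indispensable.
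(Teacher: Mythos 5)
Your proof is correct and is essentially the paper's argument: both rest on the facts that $x_3,x_4$ pin down the last two coordinates, that the norms $N_G(x_1),N_G(x_2)$ are additive (linearized) polynomials in the moving variable whose roots are exactly the possible orbit shifts, and that $f_1$ couples the first coordinate to the second when $x_4\neq 0$. The only difference is organizational: the paper first replaces $v_2$ by the element of its orbit with $a_2=b_2$, which collapses your four-case verification for $t\neq 0$ (the only step where you use the exact value $c=1/(\lambda(\lambda+1))$ beyond invariance of $f_1$) to the single observation that the $f_1$-difference equals $(a_1+b_1)t$.
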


Note that since $G$ is not a reflection group, we need at least
$5$ separating invariants by \cite[Theorem
1.1]{DufresneSeparating}.

\begin{proof}
The invariants $x_{3},x_{4}$ allow us to consider two points
$v_{1}=(a _{1},a_{2},a_{3},a_{4})$ and
$v_{2}=(b_{1},b_{2},a_{3},a_{4})$ in different orbits. If
$N_{G}(x_{2})(v_{1})=N_{G}(x_{2})(v_{2})$, then
$a_{2}+b_{2}\in\{0,a_{4},\lambda a_{4},(\lambda+1)a_{4}\}$, so
after replacing $v_{2}$ by an element in its orbit we can assume
$a_{2}=b_{2}$. If $a_{4}\ne 0$, then $f_{1}$ separates
$v_{1},v_{2}$, so assume $a_{4}=0$. Then
$N_{G}(x_{1})(v_{1})=N_{G}(x_{1})(v_{2})$ implies
$a_{1}+b_{1}\in\{0,a_{3},\lambda a_{3},(\lambda+1)a_{3}\}$, so
$v_{1},v_{2}$ are in the same orbit.
\end{proof}

\subsection{ Type (iii) } Let $G$ act on $V_{2n}=F^{2n}$ by the representation
$\sigma_{1}\mapsto\left(\begin{array}{cc} I_{n}&J_{0}\\ 0& I_{n}
  \end{array}\right)$ and $\sigma_{3}\mapsto \left(\begin{array}{cc} I_{n}&I_{n}\\ 0& I_{n}
  \end{array}\right)$. This leads to the same invariants as in type (ii) with $\lambda=0$, just
$\sigma_{1}$ and $\sigma_{3}$ are interchanged.

\subsection{ Type (iv) }Take $\lambda=1$ in type (ii). If
$e_{1},\ldots,e_{2n}\in F^{2n}$ denotes the standard basis
vectors, we can consider the submodule $V_{2n-1}:=\langle
e_{1},\ldots,e_{n},e_{n+2},\ldots, e_{2n}\rangle$. Its
representation is given by \[\sigma_{1}\mapsto
\left(\begin{array}{c|c} I_{n}&  \begin{array}{c} 0_{n-1} \\\hline
I_{n-1}\end{array}  \\ \hline 0& I_{n-1}
  \end{array}\right)\quad\text{and}\quad \sigma_{2}\mapsto
\left(\begin{array}{c|c} I_{n}&  \begin{array}{c} I_{n-1} \\\hline
0_{n-1}\end{array}  \\ \hline 0& I_{n-1}
  \end{array}\right),
\]
which correpsonds to type (iv). Since the restriction map
$F[V_{2n}]^{G}\rightarrow F[V_{2n-1}]^{G}$, $f\mapsto
f|_{V_{2n-1}}$ maps separating sets to separating sets by
\cite[Theorem 2.3.16]{DerksenKemper}, we are done by our treatment
of type (ii).

 \subsection{ Type (v)}
Again we look at the case $\lambda=1$ of type (ii). Then $\langle
 e_{n}\rangle$ is a $G$-submodule, and we look at the factor module
 $V_{2n-1}:=V_{2n}/\langle e_{n}\rangle$ with basis
 $\tilde{e_{i}}:=e_{i}+\langle e_{n}\rangle$,
 $i\in\{1,\ldots,2n\}\setminus\{n\}$. Its representation is given by
\[\sigma_{1}\mapsto
\left(\begin{array}{c|c} I_{n-1}&  \begin{array}{c|c}
I_{n-1}&0_{n-1} \end{array}  \\ \hline 0& I_{n}
  \end{array}\right)\quad\text{and}\quad \sigma_{2}\mapsto
\left(\begin{array}{c|c} I_{n-1}&  \begin{array}{c|c}
0_{n-1}&I_{n-1}\end{array}  \\ \hline 0& I_{n}
  \end{array}\right).
\]
We have a $G$-algebra inclusion
$F[V_{2n-1}]=F[x_{1},\ldots,x_{n-1},x_{n+1},\ldots,x_{2n}]\subset
F[V_{2n}]$.

The action on the variables is given by
\[
 \sigma_1(x_i)=\left\{\begin{array}{ll} x_i+x_{n+i} &\text{ for } 1\le i\le n-1 \\
       x_i &\text { for } n+1\le i\le 2n, \end{array}\right.
\]

and
\[
 \sigma_2(x_i)=\left\{\begin{array}{ll} x_i+x_{n+i+1} &\text{ for } 1\le i\le n-1 \\
       x_i &\text { for } n+1\le i\le 2n. \end{array}\right.
\]
Let $(a_1, \dots , a_{n-1}, a_{n+1}, \dots , a_{2n})\in
F^{2n-1}\cong V_{2n-1}$. We have a $G$-equivariant surjection
$V_{2n-1}\rightarrow V_{2n-3}$ given by $$\phi: (a_1, \dots ,
a_{n-1}, a_{n+1}, \dots , a_{2n}) \rightarrow (a_2, \dots ,
a_{n-1}, a_{n+2}, \dots , a_{2n})\in F^{2n-3}.$$ Therefore
$F[V_{2n-3}]=F[x_2, \cdots , x_{n-1}, x_{n+2}, \cdots x_{2n}]$ is
a $G$-subalgebra of $F[V_{2n-1}]=F[x_1, \cdots , x_{n-1}, x_{n+1},
\cdots ,x_{2n}].$
 Also, let $R:=F[x_2, \cdots , x_{n-1}, x_{n+1}, \cdots
x_{2n}]$. We will make computations modulo $R$, considered as a
subvectorspace of $F[V_{2n-1}]$, and we can re-use the equation of
Lemma \ref{trxixj}(a*).

\begin{lemma}
\label{ikili} Let $v_1, v_2\in V_{2n-1}$ be two vectors  in
different orbits  that agree everywhere except the first
coordinate.
 Say,  $v_1=(a_1,
\dots , a_{n-1}, a_{n+1}, \dots , a_{2n} )$, $v_2=(b_1, a_2 \dots
, a_{n-1}, a_{n+1}, \dots , a_{2n} )$. Assume further that one of
the following holds.
\begin{enumerate}
\item[(a)] $a_{n+2}\neq 0$ and $a_i=0$ for $n+3\le i\le 2n$
\item[(b)] $a_i=a_{2n}\neq 0$ for $n+2\le i\le 2n-1$.
\end{enumerate}
Then the invariant \[ f:=N_{H_2}(x_1x_{n+2}+x_{2}x_{n+1})\equiv
x_1^2x_{n+2}^2+ x_{1}x_{n+2}(x_{n+2}^{2}+x_{n+1}x_{n+3}) \mod R
\]
 separates
$v_1$ and $v_2$.
\end{lemma}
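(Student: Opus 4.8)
The plan is to show that the invariant $f$ takes different values on $v_1$ and $v_2$ in each of the two cases, using the explicit congruence for $f$ modulo $R$. Since $v_1$ and $v_2$ agree everywhere except in the first coordinate, and every variable except $x_1$ lies in $R$ (here $R = F[x_2,\ldots,x_{n-1},x_{n+1},\ldots,x_{2n}]$), the terms of $f$ lying in $R$ contribute the same value to both vectors and cancel in the difference $f(v_1)-f(v_2)$. So I would begin by writing $f(v_1)-f(v_2)$ using only the explicit polynomial part that involves $x_1$, namely $x_1^2 x_{n+2}^2 + x_1 x_{n+2}(x_{n+2}^2 + x_{n+1} x_{n+3})$.

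Next I would evaluate this difference. Writing $c := a_{n+2}$, $d := a_{n+1}$, $e := a_{n+3}$ for the shared coordinates, the value of the $x_1$-part at $v_1$ is $a_1^2 c^2 + a_1 c(c^2 + d e)$ and at $v_2$ is $b_1^2 c^2 + b_1 c(c^2 + de)$. Subtracting gives
\[
f(v_1)-f(v_2) = c^2(a_1^2 - b_1^2) + c(c^2 + de)(a_1 - b_1) = c(a_1-b_1)\bigl(c(a_1+b_1) + c^2 + de\bigr),
\]
where I have used characteristic $2$ so that $a_1^2 - b_1^2 = (a_1+b_1)^2 = (a_1 - b_1)^2$ and the quadratic factor simplifies. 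The goal is to prove this is nonzero. The factor $c = a_{n+2}$ is nonzero by hypothesis in both cases (a) and (b), and $a_1 \neq b_1$ since the two vectors differ in the first coordinate and lie in different orbits. So the only thing to rule out is that the last factor $c(a_1+b_1) + c^2 + de$ vanishes.

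The main obstacle is showing this last factor is nonzero, and this is exactly where the two cases (a) and (b) come into play. In case (a) we have $e = a_{n+3} = 0$, so the factor reduces to $c(a_1 + b_1) + c^2 = c(a_1 + b_1 + c)$; since $c \neq 0$, this vanishes only if $a_1 + b_1 = c = a_{n+2}$, and I would show that this equality would force $v_1$ and $v_2$ into the same $G$-orbit (one checks that $a_1 + b_1 = a_{n+2}$ means $v_2 = \sigma_2 v_1$, using $\sigma_2(x_1) = x_1 + x_{n+2}$), contradicting the hypothesis. In case (b) all of $a_{n+2}, \ldots, a_{2n-1}$ equal the common nonzero value $a_{2n}$, so in particular $c = a_{n+2} = a_{2n}$ and $e = a_{n+3} = a_{2n}$; I would substitute these and show the resulting factor cannot vanish, again deriving an orbit coincidence from vanishing and contradicting the assumption that $v_1, v_2$ lie in different orbits. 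I expect the bookkeeping of which group elements realize the orbit coincidences (distinguishing $\sigma_1, \sigma_2, \sigma_3$) to be the delicate part, since one must verify that the only way the separating factor degenerates is precisely when the two points become $G$-equivalent.
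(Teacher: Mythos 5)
Your proposal is correct and follows essentially the same argument as the paper: both reduce to the explicit $x_1$-part of $f$ modulo $R$ (valid since the $R$-part omits $x_1$ and the vectors agree elsewhere), and both show that $f(v_1)=f(v_2)$ forces $a_1=b_1+a_{n+2}$ in case (a), whence $v_1=\sigma_2v_2$, and $a_1=b_1+a_{n+1}+a_{n+2}$ in case (b), whence $v_1=\sigma_3v_2$, contradicting that the vectors lie in different orbits. The only step you leave as an outline, identifying the group element in case (b), works exactly as you anticipate: the element is $\sigma_3$, and the coordinates $2,\ldots,n-1$ match because $a_{n+i}+a_{n+i+1}=0$ in characteristic $2$ when all of $a_{n+2},\ldots,a_{2n}$ coincide.
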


\begin{proof}
Assume the first case. Then $f(v_{1})=f(v_{2})$ implies
 $(a_{1}+b_{1})^{2}a_{n+2}^{2}=(a_{1}+b_{1})a_{n+2}^{3}$, hence $a_{1}=b_{1}+a_{n+2}$.
 Since $a_i=0$ for $i\ge n+3$ this implies that $v_{1}=\sigma_2v_2$
 which is a contradiction because $v_1$ and $v_2$ are in different
 orbits.

Next assume the second case. Then $f(v_1)=f(v_2)$ implies
$(a_{1}+b_{1})^{2}a_{n+2}^{2}=(a_{1}+b_{1})a_{n+2}^{2}(a_{n+1}+a_{n+2})$,
hence $a_{1}=b_{1}+a_{n+1}+a_{n+2}$. Since $a_i=a_{2n}$ for
$n+2\le i\le
 2n-1$, this implies that $v_1=\sigma_3v_2$ yielding a
 contradiction.
\end{proof}

\begin{lemma}\label{Trx1xi3}
For $2\le i\le n-1$, we have the following elements in
$F[V_{2n-1}]^{G}$:
\[\tra^{G}(x_{1}x_{i}^{3})\equiv x_{1}x_{n+i}x_{n+i+1}(x_{n+i}+x_{n+i+1})\mod
R.\]
\end{lemma}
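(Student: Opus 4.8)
The plan is to isolate the $x_1$-part of the transfer and reduce the computation to a single symmetric cubic. Working modulo $R=F[x_2,\ldots,x_{n-1},x_{n+1},\ldots,x_{2n}]$ means discarding every term that does not involve $x_1$, exactly as in the proof of Lemma~\ref{trxixj}(a*). The key observation is that each group element moves $x_1$ only by an element of $R$: indeed $\sigma_1(x_1)=x_1+x_{n+1}$, $\sigma_2(x_1)=x_1+x_{n+2}$, and $\sigma_3(x_1)=x_1+x_{n+1}+x_{n+2}$, and the correction terms $x_{n+1},x_{n+2}$ all lie in $R$; so $\sigma(x_1)\equiv x_1\mod R$ for every $\sigma\in G$. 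On the other hand $\sigma(x_i)^3$ is a polynomial in $x_i,x_{n+i},x_{n+i+1}$, all of which lie in $R$, hence $\sigma(x_i)^3\in R$. Multiplying, the difference $\sigma(x_1)\sigma(x_i)^3-x_1\sigma(x_i)^3=(\sigma(x_1)-x_1)\sigma(x_i)^3$ lies in $R$ for each $\sigma$. Summing over $G$ therefore gives
\[
\tra^{G}(x_1x_i^{3})\equiv x_1\sum_{\sigma\in G}\sigma(x_i)^{3}=x_1\,\tra^{G}(x_i^{3})\mod R .
\]

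It remains to evaluate $\tra^{G}(x_i^{3})$, which is now a computation purely among the invariants $x_{n+i},x_{n+i+1}$. Using $\sigma_3=\sigma_1\sigma_2$ one finds the action on $x_i$ (for $2\le i\le n-1$) to be $\sigma_1(x_i)=x_i+x_{n+i}$, $\sigma_2(x_i)=x_i+x_{n+i+1}$, and $\sigma_3(x_i)=x_i+x_{n+i}+x_{n+i+1}$. Writing $a=x_i$, $u=x_{n+i}$, $v=x_{n+i+1}$, I would expand the four cubes $a^3+(a+u)^3+(a+v)^3+(a+u+v)^3$ in characteristic $2$. Collecting terms by their $a$-degree, the four $a^3$ contributions cancel, the $a^2$-coefficient $u+v+(u+v)$ vanishes, the $a^1$-coefficient $u^2+v^2+(u^2+v^2)$ vanishes, and the constant part reduces to $u^2v+uv^2=uv(u+v)$. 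Thus $\tra^{G}(x_i^{3})=x_{n+i}x_{n+i+1}(x_{n+i}+x_{n+i+1})$, which is manifestly $G$-invariant, and substituting into the congruence above yields the claimed formula (and in particular shows $\tra^G(x_1x_i^3)$ is invariant, as asserted).

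The only genuinely computational step is the characteristic-$2$ expansion of the four cubes and the verification that everything of positive degree in $a$ cancels; this is routine bookkeeping. In fact the outcome is essentially forced: $\tra^{G}(x_i^{3})$ is an invariant that must be a polynomial in $x_{n+i},x_{n+i+1}$, homogeneous of degree $3$ and symmetric under interchanging the two group generators (hence symmetric in $u,v$), and it must vanish when $u=v=0$, so the only real content is ruling out the other symmetric cubics and pinning down the coefficient.
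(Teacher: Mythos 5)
Your proposal is correct and is essentially the paper's own argument: the paper likewise notes that modulo $R$ one may replace $\sigma(x_1)$ by $x_1$ for each $\sigma\in G$ and then expands the resulting sum $x_1\bigl(x_i^3+(x_i+x_{n+i})^3+(x_i+x_{n+i+1})^3+(x_i+x_{n+i}+x_{n+i+1})^3\bigr)$ in characteristic $2$ to obtain $x_1x_{n+i}x_{n+i+1}(x_{n+i}+x_{n+i+1})$, exactly your computation repackaged through $x_1\tra^{G}(x_i^3)$. One harmless slip: the membership in $F[V_{2n-1}]^G$ does not follow from the congruence (since $R$ is not $G$-stable) but is automatic because any full-group transfer is $G$-invariant, so your final parenthetical should be dropped.
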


\begin{proof}
\begin{eqnarray*}
\tra^{G}(x_{1}x_{i}^{3})&\equiv&x_{1}x_{i}^{3}+x_{1}(x_{i}+x_{n+i})^{3}+x_{1}(x_{i}+x_{n+i+1})^{3}\\
&&+x_{1}(x_{i}+x_{n+i}+x_{n+i+1})^{3}\\
&\equiv&x_{1}x_{n+i}x_{n+i+1}(x_{n+i}+x_{n+i+1}) \mod R.
\end{eqnarray*}
\end{proof}

\begin{proposition}
Let $n\ge 3$ and $S\subseteq F[V_{2n-3}]^G$ be a separating set
for $V_{2n-3}$. Then $\phi^{*}(S)$ together with the set $T$
consisting of \[x_{n+1}, \quad N_G(x_1),\quad
 N_{H_2}(x_1x_{n+2}+x_{2}x_{n+1}),\quad \tra^{G}(x_{1}x_{2}x_{n-1}),\]  \[\tra^G(x_1x_{i}x_{i+1})\,\, \text{ for }
2\le i\le n-2,\quad\quad \tra^G(x_{1}x_{i}^{3})  \text{ for } 2\le
i\le n-1,\]
 is a separating set for $V_{2n-1}$.
\end{proposition}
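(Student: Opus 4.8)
The plan is to verify the hypotheses of Theorem \ref{ilk} for the surjection $\phi$ and the set $T$; since $S$ already separates $V_{2n-3}$, it then suffices to show that whenever $v_1,v_2\in V_{2n-1}$ lie in different $G$-orbits but $\phi(v_1)=\phi(v_2)$, some element of $T$ separates them. Two such vectors agree in every coordinate except possibly the first, belonging to $x_1$, and the $(n+1)$st, belonging to $x_{n+1}$. Since $x_{n+1}\in T$, I first use it to arrange $a_{n+1}=b_{n+1}$, so that $v_1=(a_1,a_2,\dots)$ and $v_2=(b_1,a_2,\dots)$ differ only in the first coordinate, and $d:=a_1+b_1\neq 0$ (otherwise $v_1=v_2$). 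Arguing by contradiction, I assume every element of $T$ takes equal values on $v_1$ and $v_2$ and aim to deduce that $v_1,v_2$ are $G$-conjugate.

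The next step is to read off the constraints this assumption imposes on the $G$-fixed coordinates; write $c_k:=a_{n+k}$ for $2\le k\le n$. Each transfer in $T$ is, by Lemmas \ref{trxixj}(a*) and \ref{Trx1xi3}, congruent modulo $R$ to $x_1$ times a polynomial in the fixed variables $x_{n+1},\dots,x_{2n}$. As $R$ consists of polynomials free of $x_1$ and $v_1,v_2$ agree off $x_1$, the difference of the values of such a transfer on $v_1,v_2$ equals $d$ times that polynomial evaluated on the common fixed coordinates; since $d\neq 0$, equality of values forces the polynomial factor to vanish. This yields $c_{i+1}^2=c_ic_{i+2}$ for $2\le i\le n-2$ (from $\tra^G(x_1x_ix_{i+1})$), $c_ic_{i+1}(c_i+c_{i+1})=0$ for $2\le i\le n-1$ (from $\tra^G(x_1x_i^3)$), and $c_2c_n=c_3c_{n-1}$ (from $\tra^G(x_1x_2x_{n-1})$). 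The first family provides the crucial forward propagation: if $c_i=0$ for some $2\le i\le n-2$, then $c_{i+1}^2=c_ic_{i+2}=0$, so $c_{i+1}=0$; hence one vanishing fixed coordinate forces all later ones up to $c_{n-1}$ to vanish.

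I then split according to the vanishing pattern of $(c_2,\dots,c_n)$. If $c_2\neq 0$ and $c_3\neq 0$, then $c_2c_3(c_2+c_3)=0$ gives $c_2=c_3$, and $c_{i+1}^2=c_ic_{i+2}$ propagates this to $c_2=c_3=\dots=c_n\neq 0$; this is exactly case (b) of Lemma \ref{ikili}, so $f=N_{H_2}(x_1x_{n+2}+x_2x_{n+1})\in T$ separates $v_1,v_2$, a contradiction. If $c_2\neq 0$ but $c_3=0$, the propagation gives $c_3=\dots=c_{n-1}=0$, and then $c_2c_n=c_3c_{n-1}=0$ with $c_2\neq 0$ forces $c_n=0$; this is case (a) of Lemma \ref{ikili}, and again $f$ separates $v_1,v_2$. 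Finally, if $c_2=0$, propagation gives $c_2=\dots=c_{n-1}=0$, so $N_G(x_1)=x_1(x_1+x_{n+1})(x_1+x_{n+2})(x_1+x_{n+1}+x_{n+2})$ depends on $v_1,v_2$ only through $x_1$ and $x_{n+1}$; evaluating with $c_2=0$ and using $d\neq 0$ forces $a_1+b_1=a_{n+1}$, whence $v_1=\sigma_1v_2$, contradicting that $v_1,v_2$ lie in different orbits. In every case the assumption fails, so $T$ separates $v_1,v_2$, and Theorem \ref{ilk} completes the proof.

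I expect the main obstacle to be the bookkeeping in the middle: showing that the three families of relations together pin down the vanishing pattern of $(c_2,\dots,c_n)$ to precisely the configurations covered by Lemma \ref{ikili}(a),(b) and the $\sigma_1$-case. The delicate points are the forward propagation $c_i=0\Rightarrow c_{i+1}=0$ and the fact that the endpoint relation $c_2c_n=c_3c_{n-1}$ is exactly what clears the last coordinate $c_n$ when $c_2\neq 0$ and $c_3=0$; for small $n$ one should check that the index ranges degenerate harmlessly (for instance $\tra^G(x_1x_2x_{n-1})$ is redundant when $n=3,4$).
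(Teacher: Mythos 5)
Your proof is correct, and it follows the same overall strategy as the paper's: reduce via Theorem \ref{ilk} and the invariant $x_{n+1}$ to two vectors differing only in the first coordinate, use the congruences of Lemmas \ref{trxixj}(a*) and \ref{Trx1xi3} to turn equality of the transfers on $v_1,v_2$ into polynomial constraints on the fixed coordinates (your observation that each transfer is $x_1\cdot P(x_{n+1},\dots,x_{2n})$ plus an element of $R$, so equality forces $P=0$, is exactly the mechanism the paper uses implicitly), and terminate every branch with Lemma \ref{ikili}(a), (b), or the $N_G(x_1)$ computation yielding $v_1=\sigma_1 v_2$. The only genuine difference is the organization of the case analysis: the paper anchors on the \emph{maximal} index $i$ with $a_{n+i}=0$ and propagates zeros \emph{backward} through the relations $c_jc_{j+2}+c_{j+1}^2=0$, whereas you split on $(c_2,c_3)$ and propagate \emph{forward}; correspondingly $\tra^{G}(x_1x_2x_{n-1})$ plays a mirrored role --- in the paper it clears $a_{n+2}$ in the branch where $a_{2n}\neq 0$, while in your Case 2 it clears $c_n$ when $c_2\neq 0$ and $c_3=0$. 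Both organizations cover all configurations and invoke the same terminal lemmas; yours has the small advantage that the three cases are fixed at the outset and each propagation is a one-line induction, and your closing remark about the degenerate index ranges for $n=3,4$ (where $\tra^{G}(x_1x_2x_{n-1})$ becomes trivial or coincides with $\tra^{G}(x_1x_2x_3)$) is accurate.
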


\begin{proof}
Let $v_1=(a_1, \dots , a_{n-1}, a_{n+1}, \dots , a_{2n} )$ and
$v_2=(b_1, \dots , b_{n-1}, b_{n+1}, \dots , b_{2n} )$ be two
vectors in $V_{2n-1}$ in different $G$-orbits with
$\phi(v_{1})=\phi(v_{2})$, so  $a_i=b_i$ except for  $i=1, n+1$.
To  apply Theorem \ref{ilk}, we assume for a contradiction that
all elements of $T$ take the same values on $v_{1}$ and $v_{2}$.
Since $x_{n+1}\in T$, we have $a_{n+1}=b_{n+1}$, hence we have
$v_2=(b_1, a_2, \dots , a_{n-1}, a_{n+1}, \dots , a_{2n} )$.

We first assume $a_{n+i}\ne 0$ for $2\le i\le n$. Lemma
\ref{Trx1xi3} implies $a_{n+2}=a_{n+3}=\ldots=a_{2n}\ne 0$, a
contradiction to Lemma \ref{ikili} (b). Thus there must be a $2\le
i\le n$ with $a_{n+i}=0$, so let $i$ be maximal with this
property. Consider the invariants
$f_{j}:=\tra^{G}(x_{1}x_{j}x_{j+1})\equiv
x_{1}(x_{n+j}x_{n+j+2}+x_{n+j+1}^{2}) \mod R$ of $T$ for $2\le
j\le n-2$ (see Lemma \ref{trxixj}(a*)).

If $i\le n-2$, then $a_{n+i+1}\ne 0$, and $f_{i}$ separates
$v_{1},v_{2}$.

If $i=n-1$, then $a_{2n}\ne 0$, and $f_{j}(v_{1})=f_{j}(v_{2})$
for $j=n-3,n-4,\ldots,2$ implies $a_{n+j}=0$ for $3\le j\le n-1$.
As $\tra(x_{1}x_{2}x_{n-1})\equiv
x_{1}(x_{n+2}x_{2n}+x_{n+3}x_{2n-1})\mod R$ takes the same value
on $v_{1},v_{2}$, we also have $a_{n+2}=0$. Now
$N_{G}(x_{1})(v_{1})=N_{G}(x_{1})(v_{2})$ implies
$a_{1}=b_{1}+a_{n+1}$, thus $v_{1}=\sigma_{1}v_{2}$.

If $i=n$, i.e. $a_{2n}=0$, then since  $f_{j}(v_{1})=f_{j}(v_{2})$
for $j=n-2,n-3,\ldots,2$, we get $a_{n+j}=0$ for $3\le j\le 2n$.
In case $a_{n+2}\ne 0$, we are done by Lemma \ref{ikili} (a). If
$a_{n+2}=0$, then $N_{G}(x_{1})(v_{1})=N_{G}(x_{1})(v_{2})$
implies as before $a_{1}=b_{1}+a_{n+1}$ and
$v_{1}=\sigma_{1}v_{2}$.
\end{proof}

\begin{remark}
A separating set for $V_{3}$ is formed by
$N_{G}(x_{1}),\,\,\,x_{3},\,\,\,\,x_{4}$.
\end{remark}

 \section{ Cyclic groups}
Let $G=\BZ_{p^{r}m}$ be the cyclic group of order $p^{r}m$, where
$p$ is a prime number and $r,m$ are non-negative integers with
$(m,p)=1$. Let $H$ and $M$ be the subgroups of $G$ of order
$p^{r}$ and $m$, respectively. Let $V_{n}$ be an indecomposable
$G$-module of dimension $n$.

\begin{lemma}
There exists a basis $e_1, e_2, \dots , e_n$ of $V_n$ such that
$\sigma^{-1} (e_i)=e_i+e_{i+1}$ for $1\le i\le n-1$ and
$\sigma^{-1} (e_n)=e_n$ for a generator $\sigma$ of $H$, and
$\alpha (e_i)=\lambda e_i$ for $1\le i\le n$ for a $m$-th root of
unity $\lambda\in F$ and $\alpha$ a generator of $M$.
\end{lemma}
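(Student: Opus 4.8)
The plan is to decompose $V_n$ by exploiting the coprimality of $p^r$ and $m$, which lets me treat the $H$-action and the $M$-action almost independently. First I would observe that since $|M|=m$ is coprime to $p=\cha F$, the group $M$ is linearly reductive, so the restriction of $V_n$ to $M$ splits completely into one-dimensional eigenspaces (characters), each of the form $e\mapsto \mu e$ for $\mu$ an $m$-th root of unity. The subtlety is that I must produce a \emph{single} basis that simultaneously realizes the prescribed unipotent Jordan form for a generator $\sigma$ of the $p$-group $H$ \emph{and} diagonalizes the generator $\alpha$ of $M$ with a \emph{constant} eigenvalue $\lambda$ across the whole basis.

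The key steps, in order, are as follows. Since $V_n$ is an indecomposable $G$-module and $G=H\times M$ is a direct product (as $G$ is cyclic of order $p^r m$ with the factors coprime), I would use the fact that the indecomposable $G$-modules are tensor products $U\otimes L$ where $U$ is an indecomposable $H$-module and $L$ is an irreducible (hence one-dimensional, since $F$ is algebraically closed and $M$ is abelian of order prime to $p$) $M$-module. Because $H$ is a cyclic $p$-group acting on $V_n$ with $\sigma$ unipotent, the indecomposable $H$-module of dimension $n$ is the single Jordan block; I would pick a basis $e_1,\dots,e_n$ of the $U$-factor realizing $\sigma^{-1}(e_i)=e_i+e_{i+1}$ for $1\le i\le n-1$ and $\sigma^{-1}(e_n)=e_n$. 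Tensoring with the character $L$ given by $\alpha\mapsto\lambda$ multiplies every basis vector by the same scalar $\lambda$ under $\alpha$, yielding $\alpha(e_i)=\lambda e_i$ for all $i$, while leaving the $\sigma$-action unchanged since $H$ acts trivially on the one-dimensional $L$. This produces exactly the claimed basis.

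The main obstacle I expect is \emph{justifying the tensor-product decomposition of indecomposables} for the direct product $G=H\times M$ in the modular setting, together with pinning down that the indecomposable $H$-module of a given dimension is the full Jordan block and that $\lambda$ is forced to be a genuine $m$-th root of unity. Concretely, I would invoke the standard fact that for $G=H\times M$ with $(\lvert H\rvert,\lvert M\rvert)=1$ and $M$ of order prime to $\cha F$, every indecomposable $FG$-module is isomorphic to $U\otimes_F L$ with $U$ an indecomposable $FH$-module and $L$ a one-dimensional $FM$-module; this follows because $FM$ is semisimple, so $V_n$ restricted to $M$ is a direct sum of isotypic character spaces, and the commuting $H$-action preserves each isotypic component, forcing indecomposability to concentrate $V_n$ in a single character $\lambda$. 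The value $\lambda$ is an $m$-th root of unity since $\alpha^m$ acts as the identity. Once this structural reduction is in place, the explicit basis is immediate from the classification of indecomposable modules over a cyclic $p$-group, so I would present the decomposition carefully and let the basis construction follow.
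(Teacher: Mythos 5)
Your proof is correct, but it takes a genuinely different route from the paper. The paper starts from the classification of indecomposable modules of the \emph{whole} cyclic group $G=\BZ_{p^rm}$ (citing Alperin): a generator $\rho$ of $G$ acts by a single Jordan block $J_\mu=\mu I_n+N$ with $\mu$ an $m$-th root of unity and $n\le p^r$. It then simply computes powers: $\rho^{p^r}$ generates $M$ and acts by $(\mu I_n+N)^{p^r}=\mu^{p^r}I_n$ (Frobenius in characteristic $p$, using $N^{p^r}=0$), i.e.\ by a scalar, while $\rho^m$ generates $H$ and acts by $I_n+m\mu^{m-1}N+\cdots$, a unipotent matrix whose Jordan form is the full block $J_1$ because $m\mu^{m-1}\ne 0$ in $F$; the final observation is that the base change bringing $\rho^m$ into Jordan form leaves the scalar matrix of $\rho^{p^r}$ untouched. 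You instead work structurally from the factorization $G=H\times M$: Maschke's theorem for $FM$, the $\alpha$-eigenspace decomposition being $H$-stable because the actions commute, indecomposability forcing a single eigenvalue $\lambda$, and then the classification of indecomposables for the cyclic $p$-group $H$ alone (a local-ring argument giving the single unipotent Jordan block). Your inline justification of the tensor-product fact is sound in this setting and makes the argument essentially self-contained; one small point worth making explicit is that once $\alpha$ acts by the scalar $\lambda$, every $H$-submodule is automatically a $G$-submodule, so $V_n$ is indecomposable over $H$ — this is what licenses the appeal to the single-Jordan-block classification. The trade-off: the paper's proof is shorter because it outsources the harder classification (for the composite cyclic group) to a reference and only needs elementary matrix computations on top; yours needs only the easier classification for the $p$-group, exhibits the eigenvalue-concentration mechanism explicitly, and would generalize to $H\times M$ with $M$ any abelian group of order prime to $p$.
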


\begin{proof}
It is well known that $n\le p^{r}$ and there is basis such that a
generator $\rho$ of $G$ acts by a Jordan matrix $J_{\mu}=\mu
I_{n}+N$ with $\mu$ a $m$th root of unity \cite[p. 24]{Alperin}.
Then $\rho^{p^{r}}$ is a generator of $M$ acting by $(\mu
I_{n}+N)^{p^{r}}=\mu^{p^{r}}I_{n}$, and $\rho^{m}$ is a generator
of $H$ acting by $(\mu I_{n}+N)^{m}=I_{n}+m\mu^{m-1}N+{m\choose
2}\mu^{m-2}N^{2}+\ldots$. This matrix has Jordan normal form $J_{1}$,
and the representation matrix of $\rho^{p^{r}}$ is invariant under
base change, which proves the lemma.
\end{proof}

Since we want our representation to be faithful, we will assume
that $\lambda$ is a primitive $m$th root of unity from now. We
also restrict to the case $r=1$. Let $x_1, x_2, \dots , x_n$ be
the corresponding basis elements in $V_n^*$. We have $\sigma
(x_i)=x_i+x_{i-1}$ for $2\le i\le n$, $\sigma (x_1)=x_1$ and
$\alpha (x_i)=\lambda^{-1} x_i$ for $1\le i\le n$.     Since
$\alpha$ acts by multiplication by a primitive $m$th root of
unity, there exists a non-negative integer $k$  such that
$x_nx_{i+1}^{p-1}x_i^{k}\in F[V_n]^{M}$ for $1\le i \le n-2$. We
may assume that $k$ is the smallest such integer.
 Let $I_i$ denote the ideal in $F[V_n]$ generated by $x_{1},x_{2},\ldots,x_{i}$.
 Set $f_i=x_nx_{i+1}^{p-1}x_i^{k}$ for $1\le i\le n-2$.

\begin{lemma} Let $a$ be a positive integer. Then
$\sum_{0\le l\le p-1}l^{a}\equiv -1 \, \mod p$ if $p-1$ divides
$a$ and $\sum_{0\le l\le p-1}l^{a}\equiv 0 \, \mod p$, otherwise.
\end{lemma}
\begin{proof}
See \cite[9.4]{MR1424447} for a proof for this statement.
\end{proof}
Now set $R:=F[x_1, x_2, \dots ,x_{n-1} ]$.

\begin{lemma} Let  $1\le i\le n-2$.
We have $$\tr^G_M(f_i)\equiv -x_nx_i^{p+k-1}  \, \mod \big
(I_{i-1}+R \big ).$$
\end{lemma}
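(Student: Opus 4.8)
The plan is to write the relative transfer as an explicit sum of $p$ terms and reduce each term modulo $I_{i-1}+R$ before summing. Since $(m,p)=1$ we have $G=H\times M$ with $|H|=p$, so $G/M\cong H=\langle\sigma\rangle$; as $f_i$ is $M$-invariant we may take $\{\sigma^{j}:0\le j\le p-1\}$ as coset representatives and obtain
\[
\tr^G_M(f_i)=\sum_{j=0}^{p-1}\sigma^j(f_i).
\]
Writing the action of $\sigma$ on the span of $x_1,\dots,x_n$ as $I+N$, where $N(x_l)=x_{l-1}$ and $N(x_1)=0$, gives the closed form $\sigma^j(x_l)=\sum_{t=0}^{l-1}\binom{j}{t}x_{l-t}$, which is the only input about the group action that I will need.

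Next I would reduce $\sigma^j(f_i)=\sigma^j(x_n)\,[\sigma^j(x_{i+1})]^{p-1}[\sigma^j(x_i)]^{k}$ modulo $I_{i-1}+R$, splitting the reduction into two independent halves. Modulo $R=F[x_1,\dots,x_{n-1}]$ one has $\sigma^j(x_n)=x_n+y$ with $y\in R$; since $i\le n-2$, the two remaining factors involve only $x_1,\dots,x_{i+1}$, which are among $x_1,\dots,x_{n-1}$, and hence lie in $R$, so the cross term $y\cdot[\sigma^j(x_{i+1})]^{p-1}[\sigma^j(x_i)]^{k}$ is absorbed into $R$. Modulo the ideal $I_{i-1}$ one has $\sigma^j(x_i)\equiv x_i$ and $\sigma^j(x_{i+1})\equiv x_{i+1}+jx_i$, because the lower-index contributions lie in $I_{i-1}$, and $I_{i-1}$ being an ideal survives both raising to powers and multiplying by $x_n$. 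Combining the two reductions yields
\[
\sigma^j(f_i)\equiv x_n\,x_i^{k}\,(x_{i+1}+jx_i)^{p-1}\pmod{I_{i-1}+R}.
\]

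Summing over $j$ and expanding binomially, the plan is then to isolate the $j$-dependence:
\[
\tr^G_M(f_i)\equiv x_n\,x_i^{k}\sum_{s=0}^{p-1}\binom{p-1}{s}x_{i+1}^{\,p-1-s}x_i^{\,s}\Big(\sum_{j=0}^{p-1}j^{\,s}\Big)\pmod{I_{i-1}+R}.
\]
The $s=0$ term vanishes since $\sum_{j=0}^{p-1}1=p\equiv 0$ in $F$, and for $1\le s\le p-1$ the power-sum lemma above shows $\sum_{j=0}^{p-1}j^{s}$ vanishes unless $(p-1)\mid s$, i.e.\ unless $s=p-1$, where it equals $-1$ with $\binom{p-1}{p-1}=1$. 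Hence the double sum collapses to $-x_i^{p-1}$, giving $\tr^G_M(f_i)\equiv -x_nx_i^{p+k-1}$, as claimed.

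I expect the only delicate point to be the bookkeeping of the simultaneous reduction modulo an ideal and modulo a subalgebra: one must verify that stripping the lower-order part of $\sigma^j(x_n)$ using $R$ and stripping the lower-index parts of $\sigma^j(x_i),\sigma^j(x_{i+1})$ using $I_{i-1}$ are compatible inside the product. This is exactly where the hypothesis $i\le n-2$ is used, ensuring the two non-$x_n$ factors land in $R$; everything after that is the binomial expansion and a direct appeal to the power-sum lemma.
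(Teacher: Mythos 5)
Your proof is correct and takes essentially the same route as the paper's: both expand $\tr^G_M(f_i)=\sum_{j=0}^{p-1}\sigma^j(f_i)$ over the coset representatives $\sigma^j$, reduce each summand to $x_n(x_{i+1}+jx_i)^{p-1}x_i^{k}$ modulo $I_{i-1}+R$, and then collapse the binomial expansion using the power-sum lemma. If anything, your write-up is slightly more careful than the paper's terse computation, both in justifying why reduction modulo the ideal $I_{i-1}$ and modulo the subalgebra $R$ are compatible inside the product (the point where $i\le n-2$ and the ideal property of $I_{i-1}$ are genuinely needed), and in handling the $s=0$ term separately, since the power-sum lemma is stated only for positive exponents.
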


\begin{proof}
We only care for terms containing $x_{n}$ but not
$x_{1},\ldots,x_{i-1}$, thus
we have \begin{alignat*}{1}\sigma^l (f_i)&=(x_n+lx_{n-1}+{l \choose 2}x_{n-2}+ \cdots )(x_{i+1}+lx_{i}+ \cdots )^{p-1}(x_{i}+lx_{i-1}+  \cdots )^{k}\\
&\equiv x_n(x_{i+1}+lx_{i})^{p-1}x_{i}^{k} \mod \big (I_{i-1}+R
\big ).
\end{alignat*}
Thus it suffices to show that %$\sum_{0\le l\le p-1}
$\sum_{0\le l\le p-1}(x_{i+1}+lx_{i})^{p-1}=-x_i^{p-1}$. Let $a$
and $b$ be non-negative integers such that $a+b=p-1$. Then the
coefficient of $x_{i+1}^ax_{i}^b$ in $ (x_{i+1}+lx_{i})^{p-1}$ is
${p-1 \choose b}l^b$ and so the coefficient of $x_{i+1}^ax_{i}^b$
in $\sum_{0\le l\le p-1}(x_{i+1}+lx_{i})^{p-1}$ is $\sum_{0\le
l\le p-1}{p-1 \choose b}l^b$. Hence the result follows from the
previous lemma.
\end{proof}

Let $(c_1, c_2, \dots c_n)$ be a  vector in $V_n$. There is a
$G$-equivariant surjection $\phi: V_n\rightarrow V_{n-1}$ given by
$(c_1, c_2, \dots c_n)\rightarrow (c_1, c_2, \dots c_{n-1})$.
Hence $F[V_{n-1}]=F[x_1, \cdots, x_{n-1}]$ is a $G$-subalgebra of
$F[V_n]$. Let $l$ be the smallest non-negative integer such that
$N_H(x_n)(N_H(x_{n-1}))^l\in F[V_n]^G$. Note that since $(p,m)=1$
such an integer exists.

\begin{proposition}
Let $S\subseteq F[V_{n-1}]^G$ be a separating set for $V_{n-1}$.
Then $\phi^{*}(S)$ together with the set $T$ consisting of
\[N_H(x_n)(N_H(x_{n-1}))^l,\quad N_G(x_n),\quad \tr^G_M(f_i) \quad
\text{  for  } 1\le i\le n-2,\] is a separating set for $V_n$.
\end{proposition}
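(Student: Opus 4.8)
The plan is to verify the hypotheses of Theorem~\ref{ilk} for this particular $\phi$, $S$ and $T$. Since $\phi$ simply forgets the last coordinate, two vectors $v_1,v_2\in V_n$ with $\phi(v_1)=\phi(v_2)$ agree in the first $n-1$ coordinates, so I may write $v_1=(c_1,\dots,c_{n-1},a)$ and $v_2=(c_1,\dots,c_{n-1},b)$. Assuming they lie in different $G$-orbits, I must produce an element of $T$ separating them; equivalently, I assume for contradiction that every element of $T$ agrees on $v_1,v_2$ and deduce that they lie in the same orbit. The one preliminary observation I need is the dual action on points: from $\sigma(x_i)=x_i+x_{i-1}$ and $\alpha(x_i)=\lambda^{-1}x_i$ one reads off that $\alpha$ acts on $V_n$ by the scalar $\lambda$ and that the $i$-th coordinate of $\sigma^{-j}v$ equals $\sum_{s}\binom{j}{s}c_{i-s}$; in particular the last coordinate of $\sigma^{-j}v$ is $\sum_{s}\binom{j}{s}c_{n-s}$.

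I would then split into cases according to the smallest index $i_0\in\{1,\dots,n-1\}$ with $c_{i_0}\neq 0$. If $i_0\le n-2$, I use $\tr^G_M(f_{i_0})\in T$. Because $c_1=\dots=c_{i_0-1}=0$, every element of the ideal $I_{i_0-1}$ vanishes at both points, and every element of $R=F[x_1,\dots,x_{n-1}]$ takes equal values at $v_1$ and $v_2$; hence the congruence $\tr^G_M(f_{i_0})\equiv -x_nx_{i_0}^{\,p+k-1}$ modulo $I_{i_0-1}+R$ established above gives $\tr^G_M(f_{i_0})(v_1)-\tr^G_M(f_{i_0})(v_2)=(b-a)c_{i_0}^{\,p+k-1}$. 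As $c_{i_0}\neq 0$ and $p+k-1\ge 1$, this forces $a=b$, so $v_1=v_2$, contradicting that they are in different orbits.

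There remain the two cases where $c_1=\dots=c_{n-2}=0$. If $c_{n-1}\neq0$, I evaluate the finer norm $N_H(x_n)(N_H(x_{n-1}))^l$. Using the coordinate formula above one gets $N_H(x_{n-1})(v_1)=c_{n-1}^{\,p}$ and $N_H(x_n)(v_1)=\prod_{j=0}^{p-1}(a+jc_{n-1})=a^p-ac_{n-1}^{\,p-1}$, and likewise for $v_2$. Equating the values and cancelling $c_{n-1}^{\,pl}\neq0$ yields $a^p-ac_{n-1}^{\,p-1}=b^p-bc_{n-1}^{\,p-1}$, i.e. $(a-b)^p=(a-b)c_{n-1}^{\,p-1}$, so $(a-b)/c_{n-1}\in\mathbb{F}_p$. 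Since $c_{n-1}\neq0$ forbids any nontrivial scalar from $M$, this is exactly the condition that $v_2=\sigma^{-j}v_1$ for some $j$, so $v_1$ and $v_2$ lie in one orbit. Finally, if $c_1=\dots=c_{n-1}=0$ then $\sigma$ fixes both vectors and I use $N_G(x_n)$: a direct evaluation gives $N_G(x_n)(v_1)$ and $N_G(x_n)(v_2)$ equal to a fixed scalar times $a^{pm}$ and $b^{pm}$ respectively, so equality forces $a^{pm}=b^{pm}$; because $X^{pm}-1=(X^m-1)^p$ in characteristic $p$, this means $a/b$ is an $m$-th root of unity, hence $v_1$ and $v_2$ are related by a power of $\alpha$.

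The main obstacle I anticipate is not any single computation but choosing the case division so that the invariant attached to each case matches the orbit relation exactly. The delicate point is the case $c_{n-1}\neq0$: the coarse invariant $N_G(x_n)$ detects $a^p-ac_{n-1}^{\,p-1}$ only up to an $m$-th root of unity and is too weak here, which is precisely why the sharper product $N_H(x_n)(N_H(x_{n-1}))^l$ must be included in $T$. Beyond that, the work is the bookkeeping of the binomial expansions of $\sigma^{-j}$ modulo $I_{i_0-1}+R$, together with the two elementary characteristic-$p$ identities $\prod_{j=0}^{p-1}(a+jy)=a^p-ay^{p-1}$ and $X^{pm}-1=(X^m-1)^p$.
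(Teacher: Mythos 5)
Your proof is correct and takes essentially the same route as the paper's: both apply Theorem~\ref{ilk}, split cases according to the smallest nonvanishing coordinate among $c_1,\dots,c_{n-1}$, and use $\tr^G_M(f_i)$ (via the congruence modulo $I_{i-1}+R$) when some $c_i\neq 0$ with $i\le n-2$, the product $N_H(x_n)(N_H(x_{n-1}))^l$ when $c_{n-1}\neq 0$, and $N_G(x_n)$ when all earlier coordinates vanish. Your write-up simply makes explicit the norm evaluations and characteristic-$p$ identities that the paper's proof leaves implicit.
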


\begin{proof}
Let $v_1=(c_1, c_2, \dots, c_n)$ and  $v_2=(d_1, d_2, \dots, d_n)$
be two vectors in $V_n$ in different $G$-orbits with
$\phi(v_{1})=\phi(v_{2})$, so  $c_i=d_i$ for  $1\le i\le n-1$. To
apply Theorem \ref{ilk}, we assume for a contradiction that all
elements of $T$ take the same values on $v_{1}$ and $v_{2}$.
 Assume that there exists an integer $i\le
n-2$ such that $c_i\neq 0$. Assume further that $i$ is the
smallest such integer. Then $\tr^G_M(f_i)$ separates $v_1$ and
$v_2$ by the previous lemma. Therefore we have $c_1=c_2=\cdots
=c_{n-2}=0$. We consider two cases. First assume that $c_{n-1}=0$.
Then $N_G(x_n)(v_1)=N_G(x_n)(v_2)$, i.e. $c_{n}^{pm}=d_{n}^{pm}$,
implies that $c_n=\lambda^a d_n$ for some integer $a$ and hence
$v_1$ and $v_2$ are in the same orbit.
 If
 $c_{n-1}\neq 0$, then we see that $N_H(x_n)(N_H(x_{n-1}))^l$
separates $v_1$ and $v_2$ as follows.
 We have
 $(N_H(x_{n-1}))^l(v_1)=(N_H(x_{n-1}))^l(v_2)\neq 0$. Therefore it
 suffices to show
 $N_H(x_n)(v_1)\neq N_H(x_n)(v_2)$. But otherwise
 $c_{n}^{p}-c_{n}c_{n-1}^{p-1}=d_{n}^{p}-d_{n}c_{n-1}^{p-1}$, which implies
 $c_n=d_n+lc_{n-1}$ for some $0\le l\le p-1$, so $v_1$ and $v_2$
 are in the same orbit.
\end{proof}

\bibliographystyle{plain}
\bibliography{Bibliography_Version_10}
 \end{document}